\providecommand{\U}[1]{\protect\rule{.1in}{.1in}}
\newtheorem{theorem}{Theorem}
\newtheorem{definition}[theorem]{Definition}
\newtheorem{example}[theorem]{Example}
\newtheorem{lemma}[theorem]{Lemma}
\newtheorem{proposition}[theorem]{Proposition}
\newtheorem{remark}[theorem]{Remark}
\newenvironment{proof}[1][Proof]{\noindent\textbf{#1.} }{\ \rule{0.5em}{0.5em}}
\begin{document}

\title{On compact operators between lattice normed spaces }
\author{Youssef Azouzi\quad and\quad Mohamed Amine Ben Amor\\{\small Research Laboratory of Algebra, Topology, Arithmetic, and Order}\\{\small Department of Mathematics}\\{\small Faculty of Mathematical, Physical and Natural Sciences of Tunis}\\{\small Tunis-El Manar University, 2092-El Manar, Tunisia}}
\date{}
\maketitle

\begin{abstract}
In this paper we continue the study of compact-like operators in lattice
normed spaces started recently by Aydin, Emelyanov, Erkur\c{s}un \"{O}zcand
and Marabeh. We show among others, that every $p$-compact operator between
lattice normed spaces is $p$-bounded. The paper contains answers of almost all
questions asked by these authors.

\end{abstract}

\section{Introduction}

In \cite{L-287}, the authors introduced a new notion of compact operators in
Lattice-normed spaces and studied some of their properties. These operators
act on spaces equipped with vector valued norms taking their values in some
vector lattices. Recall that an operator from a normed space $X$ to a normed
space $Y$ is said to be compact if the image of every norm bounded sequence
$\left(  x_{n}\right)  $ in $X$ has a norm convergent subsequence. This notion
has been generalized in the setting of lattice normed spaces giving rise to
two new notions: sequentially $p$-compactness and $p$-compactness ($p$
referred to the vector valued norm). Notice that these notions coincide in the
classical case of Banach spaces. In general setting with vector lattice valued
norms boundedness and convergence are considered with respect to these
`norms'. Also as notions of relatively uniform convergence and almost order
boundedness have been generalized, new properties for the operator are
considered like semicompactness. Recall that if $(E,p,V)$ and $(F,q,W)$ are
Lattice-normed spaces and $T$ is a linear operator from $E$ to $F$, then $T$
is said to be $p$\textit{-compact} (respectively, $rp$\textit{-compact}) if
for every $p$-bounded net $\left(  x_{\alpha}\right)  $ in $E$, there is a
subnet $\left(  T{x}_{\varphi\left(  \beta\right)  }\right)  $ that
$p$-converges (respectively, $rp$-converges) to some $y\in F$. The operator is
said to be \textit{sequentially }$p$\textit{-compact }if nets and subnets are
replaced by sequences and subequences above. In this paper we prove some new
results in this direction. Namely we show that every $p$-compact operator is
$p$-bounded. As a consequence we get that every $rp$-compact is $p$-bounded.
Also we give an example of sequentially $p$-compact operator which fails to be
$p$-bounded. As a consequence we deduce that a sequentially $p$-compact need
not be $p$-compact. In fact these two notions are totally independent. Example
of $p$-compact operators that fail to be sequentially $p$-compact is given. As
mentioned above the study of $p$-compact operators between lattice normed
spaces was started in \cite{L-287}. That paper contains several new results
but also some open questions. Almost all these questions will be answered in
our paper.

\section{Preliminaries}

The goal of this section is to introduce some basic definitions and facts. For
general informations on vector lattices, Banach spaces and lattice-normed
spaces, the reader is referred to the classical monographs \cite{b-240} and
\cite{b-2474}.

Consider a vector space $E$ and a real Archimedean vector lattice $V$. A map
$p:E\rightarrow V$ is called a \textit{vector norm} if it satisfies the
following axioms:

\begin{enumerate}
\item[1)] $p(x)\geq0;$\thinspace\thinspace\ $p(x)=0\Leftrightarrow
x=0$;\thinspace\thinspace$(x\in E)$.

\item[2)] $p(x_{1}+x_{2})\leq p(x_{1})+p(x_{2});\,\, ( x_{1},x_{2}\in E)$.

\item[3)] $p(\lambda x)=|\lambda| p(x);\,\, (\lambda\in\mathbb{R},\,x\in E)$.
\end{enumerate}

A triple $(E,p,V)$ is a \textit{lattice-normed space} if $p(.)$ is a
$V$-valued vector norm in the vector space $E$. When the space $E$ is itself a
vector lattice the triple $\left(  E,p,V\right)  $ is called a
\textit{lattice-normed vector lattice.} A set $M\subset E$ is called
$p$-\textit{bounded } if $p\left(  M\right)  \subset\lbrack-e,e]$ for some
$e\in V_{+}.$ A subset $M$ of a \textit{lattice-normed vector lattice}
$\left(  E,p,V\right)  $ is called $p$\textit{-almost order bounded} if, for
any $w\in V_{+}$, there is $x_{w}\in E_{+}$ such that $p((|x|-x_{w}%
)^{+})=p(|x|-x_{w}\wedge|x|)\leq w$ for any $x\in M$.

Let $(x_{\alpha})_{\alpha\in\Delta}$ be a net in a \textit{lattice-normed
space} $(E,p,V)$. We say that $(x_{\alpha})_{\alpha\in\Delta}$ is
$p$\textit{-convergent} to an element $x\in E$ and write $x_{\alpha}%
\overset{p}{\longrightarrow}x,$ if there exists a decreasing net $(e_{\gamma
})_{\gamma\in\Gamma}$ in $V$ such that $\inf_{\gamma\in\Gamma}(e_{\gamma})=0$
and for every $\gamma\in\Gamma$ there is an index $\alpha(\gamma)\in\Delta$
such that $p(v-v_{\alpha})\leq e_{\gamma}$ for all $\alpha\geq\alpha(\gamma)$.
Notice that if $V$ is Dedekind complete, the dominating net $\left(
e_{\gamma}\right)  $ may be chosen over the same index set as the original
net. We say that $\left(  x_{\alpha}\right)  $ is $p$\textit{-unbounded
convergent} to $x$ (or for short, $up$\textit{-convergent} to $x$) if
$|x_{\alpha}-x|\wedge u\overset{p}{\longrightarrow}0$ for all $u\in V_{+}$. It
is said to be \textit{relatively uniformly }$p$\textit{-convergent} to $x\in
X$ (written as, $x_{\alpha}\overset{rp}{\longrightarrow}x)$ if there is $e\in
E_{+}$ such that for any $\varepsilon>0$, there is $\alpha_{\varepsilon}$
satisfying $p(x_{\alpha}-x)\leq\varepsilon e$ for all $\alpha\geq
\alpha_{\varepsilon}$.

When $E=V$ and $p$ is the absolute value in $E,$ the $p$-convergence is the
order convergence, the $up$-convergence is the unbounded order convergence,
and the $rp$-convergence is the relatively uniformly convergence. We refer to
\cite{b-2800} and \cite{L-65} for the basic facts about nets in topological
spaces and vector lattices respectively.\textrm{ }We will use
\cite{b-2474,b-1986} as unique source for unexplained terminology in
Lattice-Normed Spaces. Since the most part of this paper is devoted to answer
several open questions in \cite{L-287}, the reader must have that paper handy,
from which we recall some definitions.

\begin{definition}
Let $X,\ Y$ be two \textit{lattice-normed space}s and $T\in L(X,Y)$. Then

\begin{enumerate}
\item $T$ is called $p$\textit{-compact} if, for any $p$-bounded net $\left(
x_{\alpha}\right)  $ in $X$, there is a subnet $x_{\alpha_{\beta}}$ such that
$Tx_{\alpha_{\beta}}\overset{p}{\longrightarrow}y$ in $Y$ for some $y\in Y$.

\item $T$ is called \textit{sequentially }$p$\textit{-compact} if, for any
$p$-bounded sequence $x_{n}$ in $X$, there is a subsequence $\left(  x_{n_{k}%
}\right)  $ such that $Tx_{n_{k}}\overset{p}{\longrightarrow}y$ in $Y$ for
some $y\in Y$.

\item $T$ is called $p$-semicompact if, for any $p$-bounded set $A$ in $X$,
the set $T(A)$ is $p$-almost order bounded in $Y$.
\end{enumerate}
\end{definition}

\section{$p$-compact operators are $p$-bounded}

It is well known that compact operators between Banach spaces are bounded.
This result remains valid for general situation of $p$-compact operators as it
will be shown in our first result, which answers positively Question 2 in
\cite{L-287}.

\begin{theorem}
\label{B}Every $p$-compact operator between two Lattice-normed spaces is $p$-bounded.
\end{theorem}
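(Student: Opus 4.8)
The plan is to argue by contradiction, converting a failure of $p$-boundedness into a $p$-bounded net whose image has no $p$-convergent subnet. Write $(X,p,V)$ and $(Y,q,W)$ for the two spaces, so that boundedness and convergence in $Y$ are understood with respect to the $W$-valued norm $q$. Recall that $T$ is $p$-bounded exactly when it carries every $p$-bounded subset of $X$ into a $q$-bounded subset of $Y$. So suppose $T$ is $p$-compact but not $p$-bounded: there is a $p$-bounded set $A\subseteq X$ for which $T(A)$ is not $q$-bounded. Unwinding the definition of $q$-boundedness, this says that no single $w\in W_{+}$ satisfies $q(Ta)\leq w$ for every $a\in A$; equivalently, for each $w\in W_{+}$ one may select a point $a_{w}\in A$ with $q(Ta_{w})\nleq w$.

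The crucial device is the choice of index set. I would direct the positive cone $W_{+}$ by its own order $\leq$; it is upward directed since $w_{1}\vee w_{2}\in W_{+}$ dominates both $w_{1}$ and $w_{2}$. With this, $(a_{w})_{w\in W_{+}}$ is a net lying entirely in $A$, hence $p$-bounded. Applying $p$-compactness yields a subnet $(a_{w_{\beta}})_{\beta\in B}$ with $Ta_{w_{\beta}}\overset{p}{\longrightarrow}y$ for some $y\in Y$.

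Next I would extract a concrete $W$-valued bound from the convergence and play it against the cofinality of the subnet. By definition of $\overset{p}{\longrightarrow}$ there is a decreasing net $(e_{\gamma})\downarrow 0$ in $W$; freezing a single index $\gamma_{0}$ gives $e_{\gamma_{0}}\in W_{+}$ and an index $\beta_{1}$ with $q(Ta_{w_{\beta}}-y)\leq e_{\gamma_{0}}$ for all $\beta\geq\beta_{1}$, whence the triangle inequality produces $q(Ta_{w_{\beta}})\leq w^{\ast}:=q(y)+e_{\gamma_{0}}$ for $\beta\geq\beta_{1}$. On the other hand, the defining property of a subnet (its index map is eventually above every fixed index of the original net) applied to $w^{\ast}\in W_{+}$ gives $\beta_{0}$ with $w_{\beta}\geq w^{\ast}$ for all $\beta\geq\beta_{0}$. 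Picking $\beta\geq\beta_{0},\beta_{1}$ in the directed set $B$ then yields $q(Ta_{w_{\beta}})\leq w^{\ast}\leq w_{\beta}$, contradicting the construction $q(Ta_{w_{\beta}})\nleq w_{\beta}$. This contradiction forces $T(A)$ to be $q$-bounded, which is the claim.

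The step I expect to be the main obstacle is the interface between the order on the index set $W_{+}$ and the subnet mechanism: one must be certain that the ambient notion of subnet delivers the ``$w_{\beta}\geq w^{\ast}$ eventually'' property, and that the dominating net $(e_{\gamma})$ can legitimately be pinned at one $\gamma_{0}$ to manufacture the genuine element $w^{\ast}\in W_{+}$ used as an index. Everything else reduces to a routine appeal to the vector-norm axioms (positivity and subadditivity) and to the very definition of $q$-boundedness.
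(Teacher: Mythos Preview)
Your argument is correct and is essentially the same as the paper's: both index the contradicting net by $W_{+}$, pass to a subnet by $p$-compactness, and then clash the resulting $q$-bounded tail against the cofinality of the subnet map in $W_{+}$. The only cosmetic difference is that you spell out the tail bound explicitly as $w^{\ast}=q(y)+e_{\gamma_{0}}$, whereas the paper simply invokes that a $p$-convergent net has a $q$-bounded tail.
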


\begin{proof}
Assume, by contradiction, that there exists a $p$-compact operator
$T:(E,p,V)\longrightarrow(F,q,W)$ which is not $p$-bounded. Then there exists
a $p$-bounded subset $A$ of $E$ such that $T\left(  A\right)  $ is not
$q$-bounded. So, for every $u\in W^{+}$ there exists some $x_{u}\in A$
satisfying $q(T(x_{u}))\not \leq u$. Since the net $(x_{u})_{u\in W^{+}}$ is
$p$-bounded there is a subnet $\left(  y_{v}=x_{\varphi\left(  v\right)
}\right)  _{v\in\Gamma}$ and an element $f\in F$ such that $\left(
Ty_{v}\right)  $ $\overset{q}{\longrightarrow}f$. It follows that the net
$\left(  Ty_{v}\right)  $ has a $q$-bounded tail, which means that for some
$v_{0}$ in $\Gamma$ and some $w\in W^{+}$ we have,%
\begin{equation}
q\left(  Tx_{\varphi\left(  v\right)  }\right)  \leq w,\qquad\text{for }v\geq
v_{0}. \label{I1}%
\end{equation}
Pick $v_{1}$ in $\Gamma$ such that $\varphi(v)\geq w$ for all $v\geq v_{1}$.
It follows that for $v\geq v_{0}\vee v_{1},$ we have $q(Tx_{\varphi\left(
v\right)  })\not \leq \varphi(v)$ and so%
\[
q(Tx_{\varphi(v)})\not \leq w,
\]
which is a contradiction with \ref{I1}. and the proof comes to its end.
\end{proof}

The following lemma, which connects unbounded order convergence with pointwise
convergence, is a known fact, although a quick proof is included for the sake
of completeness.

\begin{lemma}
\label{fx}Let $E=\mathbb{R}^{X}$ be the Riesz space of all real-valued
functions defined on a nonempty set $X.$ The following statements are equivalent:

\begin{enumerate}
\item[\textrm{(i)}] The net $\left(  f_{\alpha}\right)  _{\alpha\in A}$ is
$uo$-convergent in $E.$

\item[\textrm{(ii)}] for every $x\in X,$ the net $\left(  f_{\alpha}\left(
x\right)  \right)  _{\alpha\in A}$ is convergent in $\mathbb{R}.$
\end{enumerate}
\end{lemma}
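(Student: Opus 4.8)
The plan is to prove the equivalence of (i) and (ii) by recalling the characterization of unbounded order convergence in spaces of functions, where $uo$-convergence reduces to a pointwise statement once we understand the structure of order intervals in $\mathbb{R}^X$. The key observation is that in $E = \mathbb{R}^X$, order convergence of a net $(g_\alpha)$ to $g$ means there is a net $(h_\gamma)$ decreasing to $0$ in $E$ with $|g_\alpha - g| \le h_\gamma$ eventually; and since the order structure on $\mathbb{R}^X$ is pointwise, $h_\gamma \downarrow 0$ in $E$ is equivalent to $h_\gamma(x) \downarrow 0$ in $\mathbb{R}$ for every $x \in X$. Unbounded order convergence $g_\alpha \xrightarrow{uo} g$ then means $|g_\alpha - g| \wedge u \xrightarrow{o} 0$ for every $u \in E_+$.

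\emph{First I would establish (i) $\Rightarrow$ (ii).} Suppose $(f_\alpha) \xrightarrow{uo} f$ for some $f \in E$. Fix $x \in X$ and take the specific band projection onto the coordinate at $x$: concretely, let $u = \mathbf{1}_{\{x\}}$ be the indicator function, so $u \in E_+$. Then $|f_\alpha - f| \wedge u \xrightarrow{o} 0$, and evaluating at the point $x$ (using that order convergence in $\mathbb{R}^X$ implies pointwise convergence) gives $\min(|f_\alpha(x) - f(x)|, 1) \to 0$ in $\mathbb{R}$, whence $f_\alpha(x) \to f(x)$. This shows the net of values converges for every $x$, giving (ii).

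\emph{For (ii) $\Rightarrow$ (i),} assume $f_\alpha(x) \to \ell(x)$ in $\mathbb{R}$ for every $x \in X$, and define $\ell \in \mathbb{R}^X = E$ by these pointwise limits. I claim $f_\alpha \xrightarrow{uo} \ell$. Fix $u \in E_+$; I must show $|f_\alpha - \ell| \wedge u \xrightarrow{o} 0$ in $E$. Set $g_\alpha = |f_\alpha - \ell| \wedge u \ge 0$. Pointwise, for each fixed $x$ we have $g_\alpha(x) = \min(|f_\alpha(x) - \ell(x)|, u(x)) \to 0$ since $f_\alpha(x) \to \ell(x)$. To produce an order-dominating net decreasing to $0$, I would pass to the tail suprema: for each $\alpha$ put $h_\alpha = \sup_{\beta \ge \alpha} g_\beta$, interpreted pointwise, noting $0 \le g_\beta \le u$ keeps each $h_\alpha$ well-defined in $E$ and bounded above by $u$. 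Then $(h_\alpha)$ is decreasing, $g_\alpha \le h_\alpha$, and pointwise $\inf_\alpha h_\alpha(x) = \limsup_\alpha g_\alpha(x) = 0$; since infima in $\mathbb{R}^X$ are computed pointwise, $h_\alpha \downarrow 0$ in $E$. This exhibits $g_\alpha \xrightarrow{o} 0$, so $(f_\alpha) \xrightarrow{uo} \ell$, proving (i).

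\emph{The main obstacle} is the subtle point in the definition of $uo$-convergence in (i): the statement ``$(f_\alpha)$ is $uo$-convergent'' does not name a limit, so for (i) $\Rightarrow$ (ii) I must extract the pointwise limits from an abstract $uo$-limit, which is exactly what the coordinate-indicator argument accomplishes. The delicate verification is that order convergence in $\mathbb{R}^X$ really does coincide with the pointwise behavior used above — namely that a net decreases to $0$ in the Riesz space $\mathbb{R}^X$ precisely when it decreases to $0$ at every point — and that the tail-supremum construction stays inside $E$ (which it does because each $g_\alpha$ is bounded above by the fixed $u \in E_+$). Both facts rest on the pointwise nature of the lattice operations in $\mathbb{R}^X$, so the argument is clean once that structure is made explicit.
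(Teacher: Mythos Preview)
Your proof is correct and follows essentially the same approach as the paper's: for (i)$\Rightarrow$(ii) you test against a positive element (you use $\mathbf{1}_{\{x\}}$, the paper uses the constant $1$) and read off pointwise convergence, and for (ii)$\Rightarrow$(i) both you and the paper build the dominating net via the tail suprema $h_\alpha=\sup_{\beta\ge\alpha}\bigl(|f_\beta-\ell|\wedge u\bigr)$, which stays in $E$ by order boundedness and decreases to $0$ pointwise.
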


\begin{proof}
(i) $\Longrightarrow$ (ii) Assume that $f_{\alpha}\overset{uo}{\longrightarrow
}f$ in the Dedekind complete Riesz space $E.$ Then there is a net $\left(
g_{\alpha}\right)  _{\alpha\in A}$ which decreases to $0$ and for some
$\alpha_{0}$ we have%
\begin{equation}
\left\vert f_{\alpha}-f\right\vert \wedge1\leq g_{\alpha}\text{ for all
}\alpha\geq\alpha_{0}. \label{1}%
\end{equation}
Since $\left(  g_{\alpha}\left(  x\right)  \right)  $ decreases to $0$ for
every $x\in X,$ it follows easily from \ref{1} that $f_{\alpha}\left(
x\right)  -f\left(  x\right)  $ converges to $0,$ as desired.

(ii) $\Longrightarrow$ (i) Assume now that $f_{\alpha}$ converges simply to
some $f\in E$ and let $h\in E^{+}.$ Define a net $\left(  g_{\alpha}\right)  $
by putting%
\[
g_{\alpha}\left(  x\right)  =\sup\limits_{\beta\geq\alpha}\left(  \left\vert
f_{\beta}-f\right\vert \wedge h\right)  \left(  x\right)  ,\text{ }x\in X.
\]
it is clear that $g_{\alpha}$ decreases to $0$ and $\left\vert f_{\alpha
}-f\right\vert \wedge h\leq g_{\alpha}.$ This shows that $f_{\alpha}%
\overset{uo}{\longrightarrow}f$ and we are done.
\end{proof}

Consider the Riesz space $F$ of all bounded real valued functions defined on
the real line with countable support and denote by $E$ the direct sum
$\mathbb{R}\mathbf{1}\oplus F,$ where $\mathbf{1}$ denotes the constant
function taking the value $1.$ This example will be of great interest for us.
The following lemma establishes some of its properties. Recall that a vector
sublattice $Y$ of a vector lattice $X$ is said to be \textit{regular }if every
subset in $Y$ having a supremum in $Y$ has also a supremum in $X$ and these
suprema coincide. For more information about this notion and nice
characterizations of it via unbounded order convergence the reader is referred
to \cite{L-65}.

\begin{lemma}
\label{L2}The space $E$ introduced above has the following properties.

\begin{enumerate}
\item[(i)] $E$ is a regular vector sublattice of $\mathbb{R}^{\mathbb{R}}.$

\item[(ii)] $E$ is Dedekind $\sigma$-complete but not Dedekind complete.
\end{enumerate}
\end{lemma}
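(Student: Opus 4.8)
The plan is to treat the two parts separately, using throughout that the order and the lattice operations in $\mathbb{R}^{\mathbb{R}}$ are computed pointwise, so that the order on $E$ is just the restriction of the pointwise order.

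For (i), I would first confirm that $E$ is a vector sublattice by checking that it is closed under $g\mapsto g^{+}$; since $g\vee h=g+(h-g)^{+}$, this suffices together with the evident closure under sums and scalar multiples. Writing a general element as $g=c\mathbf{1}+f$ with $f\in F$, I would split into the cases $c>0$, $c=0$, $c<0$ and observe that in each case $g^{+}$ is again of the form (constant)$\,\mathbf{1}+$(bounded function with countable support), using that $f$ is bounded and supported on a countable set. For regularity I would show that $E$ is order dense in $\mathbb{R}^{\mathbb{R}}$: given $0<x$, pick $t_{0}$ with $x(t_{0})>0$ and note that $\min(x(t_{0}),1)\,\mathbf{1}_{\{t_{0}\}}\in F\subseteq E$ is a positive element below $x$. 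Order density of a sublattice yields regularity via the standard supremum-preservation argument (this is also the characterization through $uo$-convergence recorded in \cite{L-65}); alternatively one argues directly that if $y_{\alpha}\downarrow 0$ in $E$ then the pointwise infimum is $0$, since otherwise a positive multiple of some $\mathbf{1}_{\{t_{0}\}}$ would be a lower bound in $E$.

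For the first half of (ii), I would take a countable family $(g_{n})$ in $E$, say $g_{n}=c_{n}\mathbf{1}+f_{n}$, bounded above in $E$. The key point is that $S=\bigcup_{n}\mathrm{supp}(f_{n})$ is a countable union of countable sets, hence countable. Off $S$ each $g_{n}$ equals the constant $c_{n}$, and boundedness above forces $(c_{n})$ to be bounded, so $c_{\infty}=\sup_{n}c_{n}$ is finite. I would then check that the pointwise supremum $s=\sup_{n}g_{n}$ has the form $s=c_{\infty}\mathbf{1}+h$ with $h$ bounded and supported on $S$, so that $s\in E$; since a pointwise supremum that lies in $E$ is automatically the supremum in the sublattice $E$, this establishes Dedekind $\sigma$-completeness.

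The main obstacle is the failure of Dedekind completeness. Here I would fix $B\subseteq\mathbb{R}$ with both $B$ and $B^{c}$ uncountable and consider $A=\{\mathbf{1}_{C}:C\subseteq B,\ C\text{ countable}\}\subseteq F$, which is bounded above by $\mathbf{1}\in E$. Its pointwise supremum is $\mathbf{1}_{B}$, which fails to lie in $E$ precisely because $B$ and $B^{c}$ are both uncountable. To show $A$ has no supremum in $E$, I would characterize the upper bounds: any $u=c\mathbf{1}+f\in E$ with $u\geq\mathbf{1}_{C}$ for all admissible $C$ must satisfy $u\geq 1$ on $B$, and since $f$ has countable support while $B$ is uncountable, this forces $c\geq 1$. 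If a least upper bound $u_{0}$ existed, then for each $x_{0}\in B^{c}$ the element $\mathbf{1}-\mathbf{1}_{\{x_{0}\}}\in E$ is an upper bound, forcing $u_{0}(x_{0})\leq 0$; combined with $u_{0}\geq 0$ this gives $u_{0}=0$ on the uncountable set $B^{c}$, whence the constant part of $u_{0}$ is $0$, contradicting $u_{0}\geq 1$ on the uncountable set $B$. The delicate step is exactly this pinning argument: the constant part is forced to be at least $1$ by the values on $B$, yet minimality would force it to vanish because of the values available on $B^{c}$, and reconciling these is impossible within $E$.
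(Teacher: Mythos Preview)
Your proof is correct. Parts (i) and the $\sigma$-completeness half of (ii) match the paper's argument almost verbatim: the paper also takes the union $\Omega$ of the supports of the $f_{n}$, observes the pointwise supremum equals $\sup_{n}c_{n}$ off $\Omega$, and concludes the pointwise supremum lies in $E$. For regularity the paper gives exactly the ``alternative'' direct argument you mention (if $g_{\alpha}\downarrow 0$ in $E$ and $g=\inf_{\alpha}g_{\alpha}$ in $\mathbb{R}^{\mathbb{R}}$, then $g(x)\mathbf{1}_{\{x\}}\in E$ lies below every $g_{\alpha}$, forcing $g(x)=0$), so your order-density route is a mild variation.

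The only genuine difference is in the failure of Dedekind completeness. The paper simply exhibits the bounded net $\bigl(x\mathbf{1}_{\{x\}}\bigr)_{x\in[0,1]}$, notes that its pointwise supremum does not lie in $E$, and invokes the regularity already proved in (i) to conclude that no supremum in $E$ can exist. Your pinning argument with the set $\{\mathbf{1}_{C}:C\subseteq B\ \text{countable}\}$ is longer but self-contained: it does not appeal to regularity and instead squeezes the constant part of a putative least upper bound between $1$ (from values on $B$) and $0$ (from values on $B^{c}$). Both approaches are valid; the paper's is more economical precisely because it reuses (i), while yours would survive even if (i) had not been established.
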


\begin{proof}
(i) It is clear that $E$ is a vector sublattice of $\mathbb{R}^{\mathbb{R}}.$
To show that it is regular assume that $\left(  g_{\alpha}\right)  _{\alpha\in
A}$ is a net in $E$ satisfying $g_{\alpha}\downarrow0$ in $E.$ Let
$g=\inf\limits_{\alpha}g_{\alpha}$ in $\mathbb{R}^{\mathbb{R}}$ and
$x\in\mathbb{R}.$ Then $h=g\left(  x\right)  \mathbf{1}_{\left\{  x\right\}
}\in E$ and $0\leq h\leq g_{\alpha}$ for all $\alpha.$ This implies that $h=0$
and then $g\left(  x\right)  =0.$ Hence $g=0$ and the regularity is proved.

(ii) Let $\left(  g_{n}\right)  $ be an order bounded sequence in $E$ and
write $g_{n}=\lambda_{n}+f_{n},$ with $\lambda_{n}\in\mathbb{R}$ and $f_{n}\in
F.$ Let $\Omega$ be the union of the supports of $f_{n},$ then $\Omega$ is
countable. Let $g$ be the supremum of $\left(  g_{n}\right)  $ in
$\mathbb{R}^{\mathbb{R}},$ that is,%
\[
g\left(  a\right)  =\sup g_{n}\left(  a\right)  ,\text{ for all }%
a\in\mathbb{R}.
\]
It will be sufficient to show that $g\in E.$ To this end observe that
$g\left(  x\right)  =\alpha:=\sup\alpha_{n}$ for all $x\in\mathbb{R}%
\backslash\Omega.$ Now put $f=\left(  g-\alpha\right)  \mathbf{1}_{\Omega}.$
Then $f\in F$ and $g=\alpha+f\in E$ as required. Next we show that $E$ is not
Dedekind complete. Consider the net $\left(  g_{x}\right)  _{x\in\left[
0,1\right]  }$ in $E$ defined by $g_{x}=x\mathbf{1}_{\left\{  x\right\}  }.$
It is a bounded net in $E$ and its supremum in $\mathbb{R}^{\mathbb{R}}$ does
not belong to $E.$ As $E$ is regular in $\mathbb{R}^{\mathbb{R}}$ this net can
not have a supremum in $E.$
\end{proof}

\begin{remark}
Consider the following operator:
\[
T:L_{1}\left[  0,1\right]  \longrightarrow c_{0};\text{ }\qquad f\longmapsto
Tf=\left(  \int_{0}^{1}f\left(  t\right)  \sin ntdt\right)  _{n\geq1}.
\]
It is mentioned in \cite{b-240}, that $T$ is not order bounded; it is perhaps
more convenient to consider the same operator defined on $L_{1}\left[
0,2\pi\right]  .$ In this case if we define $u_{n}$ by $u\left(  t\right)
=\sin nt$ for $t\in\left[  0,2\pi\right]  ,$ then $\left\vert u_{n}\right\vert
\leq1,$ however $\left(  Tu_{n}\right)  =\left(  e_{n}\right)  $ is not
bounded in $c_{0},$ where $\left(  e_{n}\right)  $ denotes the standard basis
of $c_{0}.$ This statement implies also that $T$ is not sequentially order
compact. Because $\left(  e_{n}\right)  $ has no order bounded subsequence, it
follows that $\left(  Tu_{n}\right)  $ can not admit an order convergent
subsequence. So the statement made in \cite{L-287} that $T$ is $p$-bounded is
not correct.
\end{remark}

The above example is presented in \cite{L-287} to show that sequentially
$p$-compact operators need not be $p$-bounded. Although the operator given in
that example fails to be sequentially $p$-compact, the assertion that
sequentially $p$-compact operators need not be $p$-bounded is true. This will
be shown in our next example.

\begin{example}
\label{Right}Consider the Riesz spaces $E$ and $F$ defined just before Lemma
\ref{L2} and let $T$ be the projection defined on $E$ with range $F$ and
kernel $\mathbb{R}\mathbf{1}.$ We claim that $T$ is sequentially order
compact, but not order bounded. Let $\left(  f_{n}\right)  $ be an order
bounded sequence in $E.$ Then $\left\vert f_{n}\right\vert \leq\lambda$ for
some real $\lambda>0$ and for all $n.$ Write $f_{n}=g_{n}+\lambda_{n}$ with
$\lambda_{n}$ real and $g_{n}\in F$ and observe that $\left\vert
g_{n}\right\vert \leq2\lambda$ for all $n.$ We have also $\left\vert
g_{n}\right\vert \leq2\lambda\mathbf{1}_{A}\in F$ where $A$ is the union of
the supports of $g_{n},$ $n=1,2,...$ A standard diagonal process yields a
subsequence $\left(  g_{k_{n}}\right)  $ of $\left(  g_{n}\right)  $ which
converges pointwise on $A$ and then on $\mathbb{R}$ since all functions
$g_{n_{k}}$vanish on $\mathbb{R}\backslash A.$ Hence $\left(  g_{k_{n}%
}\right)  $ is uo-convergent in $\mathbb{R}^{\mathbb{R}}.$ As $\left(
g_{k_{n}}\right)  $ is order bounded this implies that $\left(  g_{k_{n}%
}\right)  $ is order convergent in $\mathbb{R}^{\mathbb{R}}.$ Observe moreover
that $\sup\limits_{p\geq n}g_{k_{n}}$ belongs to $F,$ which shows that
$\left(  g_{k_{n}}\right)  $ is order convergent in $F.$ The fact that $T$ is
not order bounded is more obvious: it is clear that the image of the net
$\left(  \mathbf{1}_{\left\{  x\right\}  }\right)  _{x\in\left[  0,1\right]
}$ by $T$ is not order bounded in $F.$
\end{example}

As an immediate consequence of Theorem \ref{B} and Example \ref{Right} we
deduce that sequentially $p$-compactness does not imply $p$-compactness. At
this stage one might expect that the converse is true. Does $p$-compactness
imply sequentially $p$-compactness? This is an open question left in
\cite{L-287}. Unfortunately the answer is again negative.

\begin{example}
Let $X$ be the set of all strictly increasing maps from $\mathbb{N}$ to
$\mathbb{N}$ and $E=\mathbb{R}^{X}$ be the space of all real-valued functions
defined on $X,$ equipped with the product topology.

\begin{enumerate}
\item First we will prove that the identity map, $\mathfrak{I},$ is a
$p$-compact operator on the lattice-normed space $(E,|\ |,E)$. To this aim,
pick a $p$-bounded net $\left(  f_{\alpha}\right)  _{\alpha\in A}$ in $E$,
that is, $|f_{\alpha}|\leq f$ for some $f\in E^{+}$ and for every $\alpha\in
A$. It follows that
\[
f_{\alpha}\in\prod_{x\in X}[-f(x),f(x)].
\]
Notice that the space $\prod_{x\in X}[-f(x),f(x)],$ equipped with the product
topology, is compact by Tychonoff's Theorem. Thus $\left(  f_{\alpha}\right)
$ has a convergent subnet $\left(  g_{\beta}\right)  _{\beta\in B}$ in
$\prod_{x\in X}[-f(x),f(x)]$ to some $g.$ This means that%
\[
g_{\beta}(x)\longrightarrow g(x)\ \text{for all }x\in X.
\]
According to Lemma \ref{fx}, $g_{\beta}$ is $uo$-convergent to $g$ in $E.$
Since bounded uo-convergent nets are order convergent, we have that $g_{\beta
}\overset{o}{\longrightarrow}g$. This proves that $\mathfrak{I}$ is a
$p$-compact operator.

\item We prove now that $\mathfrak{I}$ is not sequentially $p$-compact. Let
$\left(  \varphi_{n}\right)  $ be a sequence in $\{-1,\ 1\}^{X}$ which has no
convergent subsequence (see Example 3.3.22 in \cite{b-348}). This sequence is
order bounded in $E$ and every subsequence $\left(  \psi_{n}\right)  $ of
$\left(  \varphi_{n}\right)  $ does not converges in $\{-1,\ 1\}^{X},$ that
is, for some $x\in X,$ $\psi_{n}\left(  x\right)  $ diverges. According to
Lemma \ref{fx}, $\left(  \psi_{n}\right)  $ is not uo-convergent in $E.$ Since
$\left(  \psi_{n}\right)  $ is order bounded it does not converge in order.
This finishes the proof.
\end{enumerate}
\end{example}

In classical theory of Banach spaces the identity map is compact if and only
if the space is finite-dimensional. In contrast of this the situation is not
clear in general case. We already have seen an example of infinite-dimensional
space on which the identity map is $p$-compact. This question has been
investigated in \cite{L-287} where the authors showed that $I_{L_{1}\left[
0,1\right]  }$ fails to be compact however, $I_{\ell_{1}}$ is $p$-compact. In
the next example we show that $I_{L_{\infty}\left[  0,1\right]  }$ is not
$p$-compact, answering a question asked in \cite{L-287}.

\begin{example}
The identity operator $I$ on the lattice normed space $(L_{\infty
}[0,1],|\ .\ |,L_{\infty}[0,1])$ is neither $p$-compact nor sequentially
$p$-compact. To this end, consider the sequence of Rademacher function given
by :
\[%
\begin{array}
[c]{ccccl}%
r_{n} & : & [0,1] & \longrightarrow & \mathbb{R}\\
\  & \  & \ t & \longmapsto & sgn\left(  \sin(2^{n}\pi t)\right)
\end{array}
\]
for all $n\in\mathbb{N},$ which is order bounded since $|r_{n}|=1.$ Suppose
now that $\left(  r_{n}\right)  $ has an order convergent subnet $\left(
r_{n_{\alpha}}\right)  _{\alpha\in\Gamma}.$ Then $r_{n_{\alpha}}\overset
{o}{\rightarrow}r$ for some $r\in L_{\infty}\left[  0,1\right]  .$ Let
$\alpha\in A.$ For every $\beta>\alpha,$ $\int_{0}^{1}r_{n_{\alpha}%
}r_{n_{\beta}}d\mu=0.$ On the other hand $\left(  r_{n_{\alpha}}r_{n_{\beta}%
}\right)  _{\beta}$ converges in order to $r_{n_{\alpha}}r$ in $L_{\infty
}\left[  0,1\right]  $ and then in $L_{1}\left[  0,1\right]  .$ Since the
integral is order continuous, we deduce that
\[
\int_{0}^{1}r_{n_{\alpha}}rd\mu=0.
\]
This equality holds for every $\alpha\in A,$ and a similar argument leads to%
\[
\int_{0}^{1}r^{2}d\mu=0,
\]
which is a contradiction since $\left\vert r\right\vert =1,$ and the claim is
now proved.
\end{example}

\section{Semicompact operators}

The notion of semicompact operators has been introduced by Zaanen in
\cite{b-1087} and extended in the framework of lattice normed spaces in
\cite{L-287}.

Let $(X,p,E)$ be a lattice normed space and $(Y,q,F)$ be an lattice normed
vector lattice. A linear operator $T:$ $X\rightarrow Y$ is called
$p$\textit{-semicompact} if it maps $p$-bounded sets in $X$ to $p$-almost
order bounded sets in $Y$. We recall that a subset $B$ of $Y$ is said to be
$p$-almost order bounded if for any $w\in F_{+}$, there is $y_{w}\in Y$ such that%

\[
q((|y|-y_{w})^{+})=q(|y|-y_{w}\wedge|y|)\leq w\ \text{for all }y\in B.
\]

Semicompact operators from Banach spaces to Banach lattices fail, in general,
to be compact (see \cite{b-240}). This yields trivially that $p$%
-semicompactness does not imply $p$-compactness. However, the converse is true
in the classical case as has been shown in Theorem 5.71 in \cite{b-240}$.$ And
one can expect to extend this result in general situation. This is already the
subject of Question 4 in \cite{L-287}. Unfortunately the answer is again
negative. Before stating our counterexample let us recall that every order
bounded operator from a vector lattice $E$ to a Dedekind complete vector
lattice $F$ has a modulus (\cite{b-240}).

\begin{example}
\label{E1}Let $E$ be a Dedekind complete Banach lattice with order continuous
norm and $T$ be a norm-compact operator in $\mathcal{L}(E)$ such that $T$ has
no modulus, and therefore $T$ can not be order bounded. For the existence of
such operator we refer the reader to the Krengel's example in \cite[p
277.]{b-240}. Consider now the following lattice-normed vector spaces
$(E,\Vert.\Vert,\mathbb{R})$ and $\left(  E,p,\mathbb{R}^{2}\right)  ,$ where
$p(x)=%
\begin{pmatrix}
\Vert x\Vert\\
0
\end{pmatrix}
$ for all $x\in E.$ It is straightforward to prove that $T$ is again
$p$-compact operator and we claim that $T$ is not $p$-semicompact. To this end
we we will argue by contradiction and we assume that $T$ is $p$-semicompact.
Fix an element $u\in E^{+}$ and let $w=%
\begin{pmatrix}
0\\
1
\end{pmatrix}
$, then there exists $z_{w}$ such that $p\left(  (T(x)-z_{w})^{+}\right)  \leq
w$ for all $x\in\lbrack-u,u]$, which means that $(T(x)-z_{w})^{+}=0.$ Noting
that this occurs for $x$ and $-x$ we see that%
\[
|T(x)|\leq z_{w}\text{ for all }x\in\left[  -u,u\right]  .
\]
This shows that $T$ is order bounded, a contradiction. and our proof comes to
an end.
\end{example}

A slight modification of the proof of Example \ref{E1} leads to a more general
result. The proof of it will be left for the reader.

\begin{proposition}
Let $\left(  E,p,V\right)  $ be a lattice normed space and $\left(
F,q,W\right)  $ a lattice normed vector lattice. We assume that $q\left(
F\right)  ^{d}$ is not trivial. Then every semicompact operator $T:\left(
E,p,V\right)  \longrightarrow\left(  F,q,W\right)  $ is $p$-bounded as an
operator from $\left(  E,p,V\right)  $ to $\left(  F,\left\vert .\right\vert
,F\right)  .$
\end{proposition}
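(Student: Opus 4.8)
The plan is to adapt the argument of Example \ref{E1} almost verbatim, the only new ingredient being the hypothesis that the disjoint complement $q(F)^{d}$ is nontrivial, which is what in Example \ref{E1} was provided concretely by the second coordinate of $\mathbb{R}^{2}$. First I would argue by contradiction: assume $T$ is semicompact but fails to be $p$-bounded as an operator into $(F,|.|,F)$, i.e. fails to be order bounded. Then there is some order interval $[-u,u]\subset E$ (more precisely a $p$-bounded set, which I would fix as a $p$-bounded set whose image I will show must be order bounded) whose image $T([-u,u])$ is not order bounded in $F$.

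The key step is to exploit the nontriviality of $q(F)^{d}$. Since $q(F)^{d}\neq\{0\}$, I can pick a nonzero element $w\in W_{+}$ lying in the band disjoint from $q(F)$, so that $w\wedge q(y)=0$ for every $y\in F$; equivalently $q(y)\leq w$ forces $q(y)=0$, hence $y=0$ by axiom 1) of a vector norm. This is precisely the role played by $w=\binom{0}{1}$ in Example \ref{E1}, where indeed $q(F)=\mathbb{R}\times\{0\}$ and $w$ sits in the disjoint complement $\{0\}\times\mathbb{R}$. Applying $p$-semicompactness to the $p$-bounded set $A=[-u,u]$ with this particular $w$, there exists $z_{w}\in F$ such that $q\big((|T(x)|-z_{w}\wedge|T(x)|)\big)=q\big((|T(x)|-z_{w})^{+}\big)\leq w$ for all $x\in A$.

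Now I would invoke the disjointness: the inequality $q\big((|T(x)|-z_{w})^{+}\big)\leq w$ together with $w\in q(F)^{d}$ forces $q\big((|T(x)|-z_{w})^{+}\big)=0$, whence $(|T(x)|-z_{w})^{+}=0$, i.e. $|T(x)|\leq z_{w}$ for every $x\in A=[-u,u]$. This says exactly that $T([-u,u])$ is order bounded, contradicting the assumption. Since $u$ was arbitrary, $T$ maps every order interval into an order bounded set, so $T$ is order bounded, which is the same as being $p$-bounded from $(E,p,V)$ into $(F,|.|,F)$.

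I do not expect a serious obstacle; the proof is essentially a transcription of Example \ref{E1} with the explicit second coordinate replaced by an abstract disjoint element. The one point requiring a little care is confirming that a $p$-almost order bounded set satisfying the defining inequality with a disjoint $w$ is genuinely order bounded, i.e. that passing from $(|T(x)|-z_{w})^{+}=0$ to $|T(x)|\leq z_{w}$ is valid and uniform over $x\in A$; this is immediate from the lattice identity $a\leq b\Leftrightarrow (a-b)^{+}=0$. A second minor point is that the definition of $p$-almost order boundedness is stated in the excerpt for the modulus $|y|$ of elements $y$ in the set, so I would phrase the conclusion in terms of $|T(x)|$ directly, exactly as done in Example \ref{E1}, rather than needing to separately handle $T(x)$ and $-T(x)$.
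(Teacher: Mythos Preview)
Your argument is correct and is exactly the modification of Example \ref{E1} the paper has in mind: replace the explicit disjoint vector $\binom{0}{1}$ by any nonzero $w\in q(F)^{d}\cap W_{+}$, observe that $0\le q\big((|Tx|-z_{w})^{+}\big)\le w$ together with $w\perp q(F)$ forces this $q$-value to vanish, and conclude $|Tx|\le z_{w}$ uniformly over the set. One cosmetic correction: $(E,p,V)$ is only a lattice normed \emph{space}, not a vector lattice, so there are no order intervals $[-u,u]$ in $E$; you should phrase the argument directly for an arbitrary $p$-bounded subset $A\subset E$ (as your parenthetical already indicates) and conclude that $T(A)$ is order bounded in $F$, which is precisely $p$-boundedness into $(F,|\cdot|,F)$---the phrase ``$T$ is order bounded'' in your last line is a slip, since there is no order on $E$ to speak of.
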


\section{$rp$-compact operators}

As every $rp$-compact operator between lattice-normed spaces is $p$-compact,
the following result is an immediate consequence of Theorem \ref{B}.

\begin{theorem}
Let $(E,p,V)$ and $(F,q,W)$ be lattice-normed spaces and $T$ be in
$\mathfrak{L}(E,F)$. If $T$ is $rp$-compact then $T$ is $p$-bounded.
\end{theorem}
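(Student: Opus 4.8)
The plan is to reduce the statement to Theorem~\ref{B} via the elementary observation flagged in the sentence immediately preceding it, namely that every $rp$-compact operator is automatically $p$-compact. Once that implication is in hand, the conclusion is instantaneous: Theorem~\ref{B} asserts precisely that every $p$-compact operator between lattice-normed spaces is $p$-bounded, so an $rp$-compact $T$, being $p$-compact, is $p$-bounded. Thus the entire content of the argument lives in justifying the inclusion $\{rp\text{-compact}\}\subseteq\{p\text{-compact}\}$.

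To establish that inclusion I would unwind the two definitions of convergence. Recall that $x_\alpha\overset{rp}{\longrightarrow}x$ means there is a single $e\in F_+$ such that for every $\varepsilon>0$ one has $q(x_\alpha-x)\le\varepsilon e$ eventually. I want to show this forces $x_\alpha\overset{q}{\longrightarrow}x$ in the sense of $p$-convergence, i.e. that there is a decreasing net with infimum $0$ dominating the tails of $q(x_\alpha-x)$. The natural candidate is the net $(e_n)_{n\in\mathbb{N}}$ given by $e_n=\tfrac{1}{n}\,e$, which decreases; since the ambient vector lattice $W$ is Archimedean (vector norms here take values in Archimedean vector lattices, as stated in the Preliminaries), one has $\inf_n \tfrac{1}{n}e=0$. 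For each $n$ the $rp$-convergence with $\varepsilon=\tfrac{1}{n}$ supplies an index beyond which $q(x_\alpha-x)\le e_n$, which is exactly the data required for $p$-convergence. Hence $rp$-convergence of a net implies $p$-convergence of the same net.

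With this lemma recorded, the proof of the theorem reads as follows. Let $T\in\mathfrak{L}(E,F)$ be $rp$-compact and take any $p$-bounded net $(x_\alpha)$ in $E$. By $rp$-compactness there is a subnet $(x_{\alpha_\beta})$ with $Tx_{\alpha_\beta}\overset{rp}{\longrightarrow}y$ for some $y\in F$. By the preceding paragraph this subnet also satisfies $Tx_{\alpha_\beta}\overset{q}{\longrightarrow}y$, i.e. it $p$-converges. Since the $p$-bounded net was arbitrary, $T$ is $p$-compact. Theorem~\ref{B} then delivers that $T$ is $p$-bounded, completing the argument.

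I do not anticipate a genuine obstacle here; the statement is a corollary and the only point requiring a touch of care is the passage from the $\varepsilon$-and-single-dominator formulation of $rp$-convergence to the decreasing-net formulation of $p$-convergence. The one hypothesis I must be sure to invoke is the Archimedean property of the codomain norm's range, since that is precisely what guarantees $\inf_n\tfrac{1}{n}e=0$ and hence that the constructed dominating net has infimum zero. Everything else is a direct transcription of definitions, and the author's remark that ``every $rp$-compact operator between lattice-normed spaces is $p$-compact'' is exactly the content I have verified.
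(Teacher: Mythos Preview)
Your proposal is correct and follows exactly the paper's approach: the paper treats this theorem as an immediate consequence of Theorem~\ref{B} via the observation that every $rp$-compact operator is $p$-compact, and you have simply supplied the (routine) verification of that implication which the paper leaves implicit. The one cosmetic slip is that the dominating element $e$ should lie in $W_{+}$ rather than $F_{+}$, since $q$ takes values in $W$; otherwise the argument is sound.
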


In the following example we will prove that sequentially $p$-compact operators
need not be $rp$-compact.

\begin{example}
Let $E$ be the Riesz space defined above. We claim that the identity operator
$I:E\rightarrow E$ is sequentially $p$-compact but fails to be $rp$-compact.
Let $\left(  x_{n}\right)  $ be a bounded sequence in $E,$ that is,
$|x_{n}|\leq x$ for some $x\in E^{+}.$ Write $x=\alpha+f,$ and $x_{n}%
=\alpha_{n}+f_{n}$ where $\alpha\in\mathbb{R}^{+}$ and $f\in F$ and
$\alpha_{n}\in\mathbb{R},$ $f_{n}\in F$ for $n=1,2,...$ It is easily seen that
$|\alpha_{n}|\leq\alpha$, $|f_{n}|\leq x+\alpha$. By a standard diagonal
argument there exists a subsequence such that $f_{\varphi(n)}\left(  a\right)
$ converges for every $a\in\mathbb{R}$ and $\alpha_{\varphi\left(  n\right)
}$ converges in $\mathbb{R}.This$ shows that $x_{n}$ converges pointwise on
$\mathbb{R}$ and its limit is clearly in $E.$ By Lemma \ref{fx},
$x_{n}\overset{uo}{\longrightarrow}x$ in $\mathbb{R}^{\mathbb{R}}$ and then
$x_{n}\overset{o}{\longrightarrow}x$ in $\mathbb{R}^{\mathbb{R}}$ as it is an
order bounded sequence. Now using Lemma 27 in \cite{L-444} and Lemma \ref{L2}
we deduce that $x\in E.$ On the other hand, let $\mathcal{F}$ be the
collection of finite subsets of $\mathbb{R}_{+}$ ordered by inclusion and
consider the net $\left(  g_{A}\right)  _{A\in\mathcal{F}}$ where $g_{\alpha
}=\mathbf{1}_{\alpha}$. Then $\left(  g_{\alpha}\right)  $ is order bounded in
$E$ but has no convergent subnet. Since $g_{\alpha}\uparrow\mathbf{1}%
_{\mathbb{R}_{+}}$ in $\mathbb{R}^{\mathbb{R}}$ and $E$ is regular in
$\mathbb{R}^{\mathbb{R}},$ it follows that $\left(  g_{\alpha}\right)  $ is
not order convergent in $E$ and so are all its subnets.
\end{example}

\end{document}